\documentclass[a4paper]{amsart}
\usepackage{mathrsfs,amssymb,amsmath,graphicx,amsthm}
\usepackage[pdfauthor={Morten S. Risager and Jimi L. Truelsen},
pdftitle={Distribution of angles in hyperbolic lattices}
pdfsubjects={Primary 11J71; Secondary },
pdfkeywords={},
            pdfproducer={LaTeX2e with hyperref},
            pdfcreator={Pdflatex},
            pdfdisplaydoctitle=true
]{hyperref}

\usepackage[labelsep=none]{caption}

\usepackage{ifpdf} 
\ifpdf 
\DeclareGraphicsRule{*}{mps}{*}{} 
\fi

\newtheorem{theorem}{Theorem }
\newtheorem{lemma}{Lemma}

\theoremstyle{definition}

\theoremstyle{remark}

\def \p {{\varphi}}
\def \fortegn {\hbox{sign}}
\def \angle {{\varphi}}
\def \a {{\alpha}}
\def \G {{\Gamma}}
\def \g {{\gamma}}
\def \R {{\mathbb R}}
\def \H {{\mathbb H}}

\def \Z {{\mathbb Z}}

\def \GmodH {{\Gamma\setminus\H}}
\def \vol {\hbox{vol}}

\def \slr  {\hbox{SL}_2(\mathbb R)}

\newcommand{\inprod}[2]{\left \langle #1,#2 \right\rangle}
\newcommand{\mattwo}[4]
{\left(\begin{array}{cc}
                        #1  & #2   \\
                        #3 &  #4
                          \end{array}\right) }

\newcommand{\abs}[1]{\left\lvert #1 \right\rvert}
\newcommand{\norm}[1]{\left\lVert #1 \right\rVert}
\begin{document}
\date{\today}

\thanks{The first author was funded by a Steno Research Grant from The
  Danish Natural Science Research Council, and the second author was
  supported by a stipend (EliteForsk) from The Danish Agency for Science, Technology and Innovation}
\title{Distribution of Angles in Hyperbolic Lattices}    
\author{Morten S. Risager}
\address{Department of Mathematical Sciences, University of
  Copenhagen, Universitetsparken 5, 2100 Copenhagen \O, Denmark}
\email{risager@math.ku.dk}
\author{Jimi L. Truelsen}
\address{Department of Mathematical Sciences, University of Aarhus, Ny Munkegade Building 1530, 8000 Aarhus C, Denmark}
\email{lee@imf.au.dk}
\subjclass[2000]{Primary 11P21; Secondary 11J71} 


\begin{abstract}
We prove an effective equidistribution result about angles in a
hyperbolic lattice. We use this to generalize a result due to F. P. Boca.
\end{abstract}

\maketitle
\section{Introduction}
Consider the group $G =
\slr$ that acts on the upper halfplane $\H$ by linear fractional
transformations. Let $\G\subset G$ be a cofinite discrete
group, and let $d:\H\times\H\to \R_+$ denote the hyperbolic distance. Consider the counting function
\begin{equation*}
  N_\G(R,z_0,z_1)=\#\{\g\in \G\lvert\, d(z_0,\g z_1)\leq R\}.
\end{equation*}
The \emph{hyperbolic lattice point problem} is the problem of estimating this function as $R\to\infty$. A typical result would be an asymptotic expansion of the form 
\begin{equation}\label{hyperbolic lattice}
  N_\G(R,z_0,z_1)=\frac{\kappa_\G\pi}{\vol(\GmodH)}e^R+O(e^{ R(\a+\varepsilon)})
\end{equation}
for some $\a<1$, where $\kappa_\G = 2$ if $-I\in \G$ and $\kappa_\G = 1$ otherwise.
 The problem has been considered by numerous people including Delsarte \cite{Delsarte:1942a}, Huber \cite{Huber:1959a,Huber:1960a,Huber:1961a} ($\G$ cocompact),
 Patterson \cite{Patterson:1975a} ($\a=3/4$ if there are no small eigenvalues), Selberg (unpublished) and
 Good \cite{Good:1983b} ($\a=2/3$ if there are no small eigenvalues). Higher dimensional analogues have
 also been considered (see e.g \cite{LaxPhillips:1982a,Levitan:1987a,
 ElstrodtGrunewaldMennicke:1988a}), as well as the analogous problem
 for manifolds with non-constant curvature \cite{Margulis:1969a,
 Gunther:1980a}. For a discussion of the optimal choice of $\a$ we
 refer to \cite{PhillipsRudnick:1994a}, where the authors prove that 
 $\a$ must be at least $1/2$ and they indicate that in many
 cases we should maybe expect (\ref{hyperbolic lattice}) to hold with
 $\a=1/2$. 

Let $\angle_{z_0,z_1}(\g)$ be $(2\pi)^{-1}$ times the angle between the vertical geodesic from $z_0$ to $\infty$ and the geodesic between $z_0$ and $\g z_1$.
\begin{figure}[h]
\begin{center}
  \includegraphics{fig1.1}
\end{center}
\caption{}  
\end{figure}

These normalized angles are equidistributed modulo one, i.e. for every interval $I\subset \R\slash\Z$ we have 
\begin{equation}\label{equidistribution-angles}
  \frac{N_\Gamma^I(R,z_0,z_1)}{N_\Gamma(R,z_0,z_1)}\to \abs{I} \textrm{ as } R\to \infty,
\end{equation}
where
\begin{equation}
  N_\Gamma^I(R,z_0,z_1)=\#\{\g\in \G\lvert\, d( z_0, \g z_1)\leq R, \, \angle_{z_0,z_1}(\g)\in I\},
\end{equation}
and $\abs{I}$ is the length of the interval. This has been proved by Selberg (unpublished, see comment in \cite[p. 120]{Good:1983b}), Nicholls \cite{Nicholls:1983a} and Good  \cite{Good:1983b}.

In this paper we start by proving (\ref{equidistribution-angles}) with an error term:
\begin{theorem}\label{effective equidistribution} Let $K\subset \H$
  be a compact set. There exists a constant $\a<1$ possibly
  depending on $\G$ and $K$ such that for all $z_0,z_1\in K$ and all intervals $I$ in $\R\slash\Z$
\begin{equation*}
   \frac{N_\Gamma^I(R,z_0,z_1)}{N_\Gamma(R,z_0,z_1)}=\abs{I}+O(e^{R(\a-1+\varepsilon)}).
\end{equation*}
\end{theorem}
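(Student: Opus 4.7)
The strategy is the standard Weyl approach: Fourier-expand the indicator of $I$ in the angle variable and reduce matters to bounding lattice sums twisted by characters $e^{2\pi in\angle}$.

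First I would introduce the twisted counting functions
\begin{equation*}
  N_\G^{(n)}(R,z_0,z_1) = \sum_{\substack{\g\in\G \\ d(z_0,\g z_1)\leq R}} e^{2\pi in\angle_{z_0,z_1}(\g)}, \qquad n\in\Z,
\end{equation*}
so that $N_\G^{(0)}=N_\G$. The target estimate is
\begin{equation*}
  N_\G^{(n)}(R,z_0,z_1) = O\!\left((1+\abs{n})^{A}\,e^{R(\a+\varepsilon)}\right),\qquad n\neq 0,
\end{equation*}
for some $\a<1$ and some fixed $A$, uniformly for $z_0,z_1\in K$. The $n=0$ case is precisely the asymptotic \eqref{hyperbolic lattice}.

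To obtain the twisted bound for $n\neq 0$ I would realise $\angle_{z_0,z_1}(\g)$ as a $\sor$-coordinate on $G=\psl$: writing $z_0=g_0\cdot i$, $z_1=g_1\cdot i$, the normalised angle is, up to constants depending only on $g_0,g_1$, a function of the $\sor$-components appearing in an Iwasawa (or $KAK$) decomposition of $g_0^{-1}\g g_1$. Hence
\begin{equation*}
  N_\G^{(n)}(R,z_0,z_1)=\sum_{\g\in\G} k_{n,R}(g_0^{-1}\g g_1),
\end{equation*}
where $k_{n,R}$ is a kernel supported on hyperbolic distance $\leq R$ and transforming as a weight-$n$ character of $\sor$. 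Decomposing the associated automorphic kernel on $\G\backslash G$ into $\sor$-isotypes, the spectrum that appears is that of Maass forms and Eisenstein series of weight $n$; for $n\neq 0$ the trivial representation drops out, so no $e^R$-sized main term is produced. Bounding the contribution of each spectral parameter $s(1-s)$ via the Selberg/Harish-Chandra transform of $k_{n,R}$, in a manner parallel to the proof of \eqref{hyperbolic lattice} in \cite{Patterson:1975a,Good:1983b} but with the weight-$n$ radial function in place of the spherical one, gives the bound above; the polynomial factor $(1+\abs{n})^A$ absorbs the $n$-dependence of the transform and of the pointwise bounds on weight-$n$ eigenfunctions at points of the compactum $K$.

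With the twisted bound in hand I would approximate $\chi_I$ from above and below by Beurling--Selberg trigonometric polynomials $F_N^\pm$ of degree $\le N$ satisfying $F_N^-\le \chi_I\le F_N^+$ and $\int_{\R/\Z}(F_N^+-F_N^-)\,d\angle = O(N^{-1})$. Inserting their Fourier expansions and combining the twisted bound with \eqref{hyperbolic lattice} yields
\begin{equation*}
  N_\G^I(R,z_0,z_1) = \abs{I}\,N_\G(R,z_0,z_1)+O\!\left(N^{-1}e^R+N^{A+1}e^{R(\a+\varepsilon)}\right).
\end{equation*}
Balancing the two error terms by taking $N = e^{R(1-\a-\varepsilon)/(A+2)}$ and dividing by $N_\G(R,z_0,z_1)\asymp e^R$ gives the theorem, possibly after replacing $\a$ by a slightly larger value still strictly less than $1$.

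The main technical obstacle is the twisted lattice-point bound with explicit polynomial control in the weight $n$. The usual derivations of \eqref{hyperbolic lattice} are written for spherical test functions; one has to verify that the Selberg transform of the weight-$n$ radial kernel, the pointwise bounds on weight-$n$ Maass eigenfunctions on the compactum $K$, and the contour shift isolating the small-eigenvalue poles each degrade only polynomially in $n$. Once this uniformity is in place, Weyl's criterion together with the Beurling--Selberg smoothing assembles the estimate in a routine way.
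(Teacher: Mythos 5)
Your endgame coincides with the paper's: reduce to the Weyl sums $\sum_{d(z_0,\gamma z_1)\le R} e(n\angle_{z_0,z_1}(\gamma))$ and pass from these to arbitrary intervals by a Beurling--Selberg/Erd\"os--Tur\'an argument (the paper invokes the Erd\"os--Tur\'an inequality directly). The divergence, and the problem, lies in how you propose to bound the twisted sums. You route everything through the weight-$n$ spectral decomposition of a radial kernel on $\Gamma\backslash G$, i.e.\ Good's method, and you defer the crucial point --- that the weight-$n$ Selberg/Harish-Chandra transform, the sup-norm bounds on weight-$n$ eigenfunctions over $K$, and the contour shift all degrade only polynomially in $n$ --- to a ``verification.'' That is not a routine verification: it is precisely the ingredient the authors identify as missing from \cite{Good:1983b} (``The one missing point \dots is the dependence of $n$ in the expansion of the exponential sum''), and it is the stated reason they abandon this route. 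As written, your proposal assumes the one estimate that carries all the content of the theorem beyond what Selberg, Nicholls and Good had already established, namely equidistribution without a rate.

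The paper's alternative is worth internalizing because it extracts the $n$-dependence almost for free. Instead of weight-$n$ spectral theory, one forms the scalar Dirichlet series $G_n(z,s)=\sum_{\gamma\in\G}e(n\p(\gamma z)/\pi)\cosh(r(\gamma z))^{-s}$ and applies the ordinary weight-zero Laplacian to it. Because $\partial^2/\partial\p^2$ produces an explicit factor $-4\pi^2 n^2/\pi^2$, one obtains the functional identity $(-\Delta-s(1-s))G_n(z,s)=s(s+1)G_n(z,s+2)+n^2(\cdots)$, which is inverted with the weight-zero resolvent. The continuation to $\Re(s)>1/2$, the absence of a pole at $s=1$ when $n\ne 0$, and the vertical-line bound $G_n(z,s)=O(\abs{t}(\abs{t}^2+n^2))$ (via the resolvent norm estimate together with Sobolev embedding and elliptic regularity to convert $L^2$ bounds into pointwise ones on $K$) then yield the twisted bound with explicit $n^2$ dependence after Mellin inversion against a smoothed cutoff, followed by an unsmoothing step using the plain lattice count. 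To complete your proposal you must either carry out the uniform-in-$n$ analysis of Good's weight-$n$ machinery in detail, or switch to an argument of this resolvent type.
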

If we assume that the automorphic Laplacian on $\GmodH$ has no
exceptional eigenvalues, i.e. eigenvalues in $]0,1/4[$,  we prove that we can take
\begin{equation*}
  \a=11/12.
\end{equation*}
If there are exceptional eigenvalues the exponent could become larger, depending on how close to zero they are. We prove Theorem \ref{effective equidistribution} by proving asymptotic expansions for the exponential sums
\begin{equation}\label{exp-sum}
  \sum_{\substack{\g\in \G\\ d( z_0,\g z_1)\leq R}}e(n\angle_{z_0,z_1}(\g)),
\end{equation}
where  $n\in \Z$ and $e(x)=\exp(2\pi i x)$. The exponent $11/12$ can certainly be improved. In fact our proof uses
a variant of Huber's method \cite{Huber:1959a} which does not give the
optimal bound even for the expansion (\ref{hyperbolic lattice}). In
principle Theorem \ref{effective equidistribution} could be proved by
using the method of Good from \cite{Good:1983b}, which gives the best
known error term in the hyperbolic lattice point problem (\ref{hyperbolic lattice}). The one missing point in \cite{Good:1983b} to prove Theorem \ref{effective equidistribution} is the dependence of $n$ in the expansion of the exponential sum (\ref{exp-sum}). Rather than patiently tracking down the $n$-dependence, we found it more to the point -- albeit at the expense of poor error terms -- to provide an alternative and more direct proof inspired by \cite{Huber:1959a}.


Recently Boca \cite{Boca:2007a} considered a related problem: What
happens if we order the elements according to $d( z_1,\g z_1)$ instead
of $d(z_0,\g z_1)$? Let $\G(N)$ be the principal congruence group of level $N$ i.e. the set of $2\times2$ matrices $\g$ satisfying $\g\equiv I \mod N$. Boca identified for these groups the limiting distribution using
non-trivial bounds for Kloosterman sums. He proved the following\footnote{ Readers
  consulting \cite{Boca:2007a} should be warned that our notation differs slightly from Boca's.}:
Let $z_0, z_1 \in \H$ and 
  let $\omega_{z_0,z_1}(\g)$ denote the angle
  in $[-\pi/2,\pi/2]$ between the vertical geodesic through $z_0$ and
  the the geodesic containing $z_0$ and $\g z_1$ (if $z_0 = \g z_1$ you
  can assign $\omega_{z_0,z_1}(\g)$ the value $0$ -- it does not matter
  what you choose, since there are only a finite number of such
  $\gamma$'s). 
\begin{figure}[h]
\begin{center}
  \includegraphics{fig2.1}
\end{center}
\caption{}
\end{figure}

For any interval $I \subset [-\pi/2,\pi/2]$ we consider the counting function 
\begin{equation*}
   \mathfrak{N}_\G^I(R,z_0,z_1) =\#\{\g\in \G\mid\, d( z_1,\g z_1)\leq R, \,
    \omega_{z_0,z_1}(\g)\in I\}.
\end{equation*}
We emphasize that the elements are ordered according to $d(z_1,\g z_1)$ instead of  $d(z_0,\g z_1)$. We shall write $\mathfrak{N}_\G(R,z_0,z_1)$ instead of $\mathfrak{N}_\G^{[-\pi/2,\pi/2]}(R,z_0,z_1)$. Following Boca we define 
\begin{align*}
\eta_{z_0,z_1} (t)=
\frac{2y_0y_1(y_0^2+y_1^2+(x_0-x_1)^2)}{(y_0^2+y_1^2+(x_0-x_1)^2)^2 -
  ((y_1^2-y_0^2+(x_0-x_1)^2)\cos(t)+2y_0(x_0-x_1)\sin(t))^2}.
\end{align*}
Then Boca proves the following result:
\begin{theorem}\label{boca-result} Let $\G=\G(N)$.  For any interval $I \subset [-\pi/2,\pi/2]$ 
  \begin{equation*}
 \frac{\mathfrak{N}_{\G}^I(R,z_0,z_1)}{\mathfrak{N}_{\G}(R,z_0,z_1)}     =
 \frac{1}{\pi}\int_I \eta_{z_0,z_1}(t)dt+O(e^{(7/8-1+\varepsilon)R})
  \end{equation*}
for any $\varepsilon > 0$.
\end{theorem}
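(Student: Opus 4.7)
The plan is to deduce Theorem~\ref{boca-result} from Theorem~\ref{effective equidistribution} by a geometric change of variables, exploiting the fact that for large $\rho := d(z_1,\gamma z_1)$ the distance $r := d(z_0,\gamma z_1)$ differs from $\rho$ only by an angle-dependent shift.

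Concretely, writing $D := d(z_0,z_1)$ and letting $\theta$ denote the interior angle at $z_0$ in the geodesic triangle with vertices $z_0, z_1, \gamma z_1$, the hyperbolic law of cosines reads
\[
\cosh\rho \;=\; \cosh D\,\cosh r - \sinh D\,\sinh r\,\cos\theta.
\]
Expanding in exponentials yields $r = \rho - \log C(\theta) + O(e^{-2R})$ with $C(\theta) = \cosh D - \sinh D\cos\theta$. The angle $\theta$ is determined by $\omega := \omega_{z_0,z_1}(\gamma)$ together with the side of $z_0$ on which $\gamma z_1$ lies, i.e.\ by which of the two preimages of $\omega$ under the two-to-one map $\varphi_{z_0,z_1}\mapsto\omega$ is selected. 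This produces two branches $\theta_+(\omega)$ and $\theta_-(\omega) = \theta_+(\omega)+\pi$, so the condition $\rho\le R$ becomes $r \le R - \log C(\theta_\pm(\omega))$ on the corresponding sheet.

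I would then partition $I\subset[-\pi/2,\pi/2]$ into short sub-intervals $I_j$ of length $\delta$, choose a representative $\omega_j\in I_j$, and on each sheet apply Theorem~\ref{effective equidistribution} to the arc of $\varphi$-values (of length $|I_j|/(2\pi)$) lying over $I_j$. Summing the two sheet contributions over $j$ produces the main term
\[
\frac{\kappa_\Gamma\, e^R}{\vol(\GmodH)}\int_I \frac{1}{2}\!\left(\frac{1}{C(\theta_+(\omega))} + \frac{1}{C(\theta_-(\omega))}\right) d\omega,
\]
with total error $O(\delta^{-1} e^{R(\alpha+\varepsilon)}) + O(\delta\, e^R)$. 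A direct trigonometric computation using $\cosh D = 1 + |z_0-z_1|^2/(2y_0y_1)$ and the explicit form of $\cos\theta_\pm(\omega)$ in $(x_0,y_0,x_1,y_1)$ identifies the integrand with $\eta_{z_0,z_1}(\omega)$ itself; since $\int_{-\pi/2}^{\pi/2}\eta_{z_0,z_1}(\omega)\,d\omega = \pi$ by a standard evaluation, normalizing by $\mathfrak{N}_\Gamma(R,z_0,z_1)$ delivers the density $\eta_{z_0,z_1}/\pi$ appearing in the statement.

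The main obstacle is the sharp exponent $7/8$. Balancing the accumulated error $O(\delta^{-1} e^{R(\alpha+\varepsilon)})$ from Theorem~\ref{effective equidistribution} against the discretization error $O(\delta\, e^R)$ and inserting $\alpha = 11/12$ produces only an error of size $e^{R((1+\alpha)/2-1+\varepsilon)} = e^{R(-1/24+\varepsilon)}$, which is weaker than Boca's $e^{R(-1/8+\varepsilon)}$. Reaching $7/8$ requires sharper input than the Huber-style Theorem~\ref{effective equidistribution}: specifically, Weil-type bounds for the Kloosterman sums attached to $\Gamma(N)$, used to estimate the exponential sums (\ref{exp-sum}) directly, and this is the route followed in Boca's original argument.
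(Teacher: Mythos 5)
Your analysis is correct, and it correctly identifies the crucial subtlety: this paper does \emph{not} prove Theorem~\ref{boca-result} with the stated exponent. The theorem is quoted verbatim from Boca's paper; the only thing the present paper establishes is that its own Theorem~\ref{boca-generalized} (specialized to $\G=\G(N)$, $w=z_1$) recovers the main term, using the folding identity $\eta_{z_0,z_1}(2\pi t)=\rho_{z_0,z_1}(t)+\rho_{z_0,z_1}(t+1/2)$ that glues the two sheets of the double cover $\varphi\mapsto\omega$ you describe. The paper is explicit that this reproduces Boca's theorem ``with a poorer error term though.'' Your balancing of $O(\delta^{-1}e^{(\alpha+\varepsilon)R})$ against $O(\delta e^R)$ gives exactly the exponent $(1+\alpha)/2-1=-1/24$ for $\alpha=11/12$, which matches what falls out of Lemma~\ref{Riemann sums} in the paper and is strictly weaker than Boca's $-1/8$. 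You are also right that $7/8$ is reached in Boca's original argument by feeding Weil-type bounds for the Kloosterman sums attached to $\G(N)$ into the exponential-sum estimates; that is precisely the input the Huber-style spectral argument here cannot supply without further work.

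One stylistic difference worth flagging: where you invoke the hyperbolic law of cosines $\cosh\rho=\cosh D\cosh r-\sinh D\sinh r\cos\theta$ to obtain $r=\rho-\log C(\theta)+O(e^{-2R})$ with $C(\theta)=\cosh D-\sinh D\cos\theta$, the paper instead computes the intersection of the hyperbolic circle of radius $R$ about $z_1$ with the geodesic ray from $z_0=i$ at angle $t$ by solving a quadratic in Euclidean coordinates, arriving at $e^{Q(z_1,t,R)}=2y_1e^R/(\beta-(\beta-2)\cos t+2x_1\sin t)+O(1)$. These are the same asymptotic: with $D=d(i,z_1)$ one has $\cosh D=\beta/(2y_1)$ and, after rotating by the angle $\theta_0$ of the geodesic $i\to z_1$, the $\cos t$ and $\sin t$ coefficients are $\sinh D\cos\theta_0$ and $-\sinh D\sin\theta_0$. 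Your route is cleaner and coordinate-free; the paper's route produces the explicit density directly in rectangular coordinates. Either feeds into the Riemann-sum machinery of Lemma~\ref{Riemann sums} the same way. The one thing to keep in mind if you pursue your version further is that $\theta$ in the law of cosines is measured from the $i\to z_1$ direction, whereas $\varphi_{z_0,z_1}$ (and Boca's $\omega$) are measured from the vertical, so the angular shift by $\theta_0$ must be tracked when identifying the limiting density with $\eta_{z_0,z_1}$.
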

In the view of (\ref{hyperbolic lattice}) Theorem \ref{boca-result} is equivalent to an expansion of $\mathfrak{N}_{\G(N)}^I(R,z_0,z_1)$.
We generalize and refine Boca's result: With data as above, $I \subset
\R/\Z$ and $w\in \H$ we consider the counting function 
\begin{equation*}
  \mathscr{N}_\G^I(R,z_0,z_1,w) =\#\{\g\in \G\lvert\, d( z_1, \g w)\leq R, \, \angle_{z_0,w}(\g)\in I\}.
\end{equation*}
We emphasize that we order according to $d( z_1, \g w)$. As before we shall write $\mathscr{N}_\G(R,z_0,z_1,w)$ instead of $\mathscr{N}_\G^{[-1/2,1/2]}(R,z_0,z_1,w)$. Besides the more general ordering our result is more
  refined in the sense that we can distinguish between angles
  that differ by $\pi$. Consider 
\begin{equation*}
  \rho_{z_0,z_1}(\omega)= \frac{2y_0 y_1}{((x_0-x_1)^2+y_0^2+y_1^2)(1-\cos(2\pi\omega))+2y_0^2\cos(2\pi\omega)+2(x_1-x_0)y_0\sin(2\pi\omega)}.
\end{equation*}
Then we prove the following result:
\begin{theorem}\label{boca-generalized} Let $\G$ be any
  cofinite Fuchsian group. There exists $\alpha<1$ such that
  for any $I \subset \R/\Z$ we have 
  \begin{equation*}
   \frac{\mathscr{N}_\G^I(R,z_0,z_1,w)}{\mathscr{N}_\G(R,z_0,z_1,w)}
   =\int_I \rho_{z_0,z_1}(\omega)d\omega+O(e^{(\a-1+\varepsilon)R})
  \end{equation*}
for any $\varepsilon > 0$.
\end{theorem}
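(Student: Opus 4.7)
\emph{Proof proposal.}
The plan is to bridge the two orderings, $d(z_0,\g w)\le R$ (controlled by Theorem~\ref{effective equidistribution} applied to the pair $(z_0,w)$) and $d(z_1,\g w)\le R$ (the ordering in $\mathscr N^I_\G$), via the Busemann function. For each $\g\in\G$ with $\g w$ far from the fixed data, let $\xi(\g)\in\partial\H$ denote the endpoint of the geodesic ray from $z_0$ through $\g w$. The normalised angle $\omega=\angle_{z_0,w}(\g)$ depends only on this direction, so $\omega\mapsto\xi(\omega)$ is a smooth bijection $\R/\Z\to\partial\H$. Writing $B_\xi(z)=\log(|z-\xi|^2/y)$ for the usual Busemann function (with the customary modification at $\xi=\infty$), one has the classical estimate
\begin{equation*}
d(z_1,\g w)-d(z_0,\g w)=B_{\xi(\g)}(z_1)-B_{\xi(\g)}(z_0)+O(e^{-2d(z_0,\g w)}),
\end{equation*}
so the two orderings differ, up to an exponentially small remainder, by the smooth function $c(\omega):=B_{\xi(\omega)}(z_1)-B_{\xi(\omega)}(z_0)$ of the angle alone.

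Next, partition $I$ into $M=\lceil|I|/\delta\rceil$ subintervals $I_k$ of length at most $\delta$, pick $\omega_k\in I_k$, and compare
\begin{equation*}
\#\{\g:d(z_1,\g w)\le R,\,\omega(\g)\in I_k\}\quad\text{with}\quad N^{I_k}_\G(R-c(\omega_k),z_0,w).
\end{equation*}
The symmetric difference consists of $\g$'s lying in the angular sector $I_k$ and in a hyperbolic shell of width $O(\delta)$; applying Theorem~\ref{effective equidistribution} to both boundary radii gives that its cardinality is $O(\delta^{2}e^{R})+O(e^{(\alpha_0+\varepsilon)R})$, where $\alpha_0<1$ is the exponent furnished by Theorem~\ref{effective equidistribution} on a compact set containing $z_0,z_1,w$. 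Applying Theorem~\ref{effective equidistribution} also to $N^{I_k}_\G(R-c(\omega_k),z_0,w)$ itself, summing over $k$, and recognising $\sum_k|I_k|e^{-c(\omega_k)}=\int_I e^{-c(\omega)}\,d\omega+O(\delta)$ as a Riemann sum, one obtains
\begin{equation*}
\mathscr N^I_{\G}(R,z_0,z_1,w)=\frac{\kappa_\G\pi}{\vol(\GmodH)}\,e^R\int_I e^{-c(\omega)}\,d\omega+O(\delta e^R)+O(\delta^{-1}e^{(\alpha_0+\varepsilon)R}).
\end{equation*}
Optimising at $\delta=e^{(\alpha_0-1)R/2}$ and setting $\alpha=(1+\alpha_0)/2<1$ yields total error $O(e^{(\alpha+\varepsilon)R})$, and the same argument with $I=\R/\Z$ handles the normalisation $\mathscr N_\G(R,z_0,z_1,w)$, producing the ratio expansion claimed in the theorem.

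The density is then identified with $\rho_{z_0,z_1}(\omega)=e^{-c(\omega)}$. Since $d\omega$ is the harmonic measure on $\partial\H$ seen from $z_0$ and $e^{-c(\omega)}\,d\omega$ is the harmonic measure seen from $z_1$, the identity $\int_0^1 e^{-c(\omega)}\,d\omega=1$ is automatic, so no separate normalisation is needed. Parametrising the geodesic from $z_0$ at angle $2\pi\omega$ from vertical gives $\xi(\omega)=x_0+y_0\cot(\pi\omega)$, and substituting into $e^{-c(\omega)}=y_1|z_0-\xi|^2/(y_0|z_1-\xi|^2)$ together with standard double-angle identities recovers the explicit formula for $\rho_{z_0,z_1}$ displayed in the statement (up to the orientation convention for $\omega$).

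The main obstacle I anticipate is the quantitative bookkeeping in the discretisation step: the shell of $\g$'s with $d(z_1,\g w)\approx R$ has the same order of magnitude as the whole sphere of radius $R$, and after the angle-dependent shift it is smeared by $O(\delta)$, so both the exponentially small Busemann remainder and the two-term balance $\delta=e^{(\alpha_0-1)R/2}$ are needed to keep this error below the main term. As a consequence the exponent $\alpha=(1+\alpha_0)/2$ obtained for Theorem~\ref{boca-generalized} is necessarily weaker than $\alpha_0$, reflecting a square-root-type loss inherent in the comparison between the two orderings.
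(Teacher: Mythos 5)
Your argument is correct and reaches the theorem with the same shape of error term; its counting half is in fact identical to the paper's. The discretisation into $\sim\delta^{-1}$ angular sectors, the application of Theorem~\ref{effective equidistribution} to each angle-shifted radius, the recognition of the Riemann sum for $\int_I k$, and the balance $\delta=e^{(\alpha_0-1)R/2}$ are precisely the content of the paper's Lemma~\ref{Riemann sums} (with $B\sim\delta^{-1}$), including the square-root loss $\alpha=(1+\alpha_0)/2$ that you correctly identify as inherent to this comparison. Where you take a genuinely different route is in producing the input to that lemma, namely the asymptotic $e^{D(R,\omega)}=k(\omega)e^R+O(1)$ for the $z_0$-radius cut out by the ball of radius $R$ about $z_1$ in the direction $\omega$: the paper obtains this by explicitly intersecting the hyperbolic circle about $z_1$ with the geodesic through $z_0$ at angle $t$ in Euclidean coordinates and Taylor-expanding (the computation from (\ref{xprimecoord}) through (\ref{R_distance})), whereas you obtain it from the exponentially fast convergence of $d(z_1,\cdot)-d(z_0,\cdot)$ to the Busemann cocycle $c(\omega)$ along the ray from $z_0$, so that $k=e^{-c}$. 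Your derivation is shorter and more conceptual: it exhibits $\rho_{z_0,z_1}$ as the Radon--Nikodym derivative of the harmonic (visual) measure at $z_1$ with respect to that at $z_0$, makes $\int_0^1\rho(\omega)\,d\omega=1$ automatic rather than a computation, and transfers to more general rank-one settings; I checked that $e^{-c(\omega)}=y_1\abs{z_0-\xi}^2/(y_0\abs{z_1-\xi}^2)$, with $\xi$ the endpoint of the ray at angle $2\pi\omega$, does reduce to the displayed $\rho_{z_0,z_1}$ up to your orientation convention. The only points you should make explicit are the uniformity, for $z_0,z_1,w$ in a compact set, of the $O(e^{-2d})$ Busemann remainder and of the $C^1$ norm of $c$ (needed so that replacing $c(\omega)$ by $c(\omega_k)$ on each sector costs only $O(\delta)$) --- both are standard facts, and they are exactly what the paper's explicit coordinate expansion delivers by hand.
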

Note that in the special case of $\G=\G(N)$ and $w=z_1$ this implies Theorem \ref{boca-result} (with a poorer error term though), since
\begin{equation*}
\eta_{z_0,z_1}(2\pi t) = \rho_{z_0,z_1}(t) + \rho_{z_0,z_1}(t+1/2).
\end{equation*}
We will prove that Theorem \ref{boca-generalized} follows from Theorem \ref{effective equidistribution}. 

Whereas Boca is using a non-trivial bound for Kloosterman sums, we are
utilizing the fact that for \emph{any} group there is a spectral gap between the zero eigenvalue of the Laplacian and the first non-zero eigenvalue. As in Theorem \ref{effective equidistribution} the $\a$ in Theorem \ref{boca-generalized} generally depends on the size of the first non-zero eigenvalue.

We remark that all the results presented here, can easily be
 phrased in terms of points in the orbit $\G z_1$, rather than elements
 in $\G$, since
 \begin{align*} \#\{z \in \G z_1 \mid d(z_0,z)\leq R\} =
   \frac{N_\G(R,z_0,z_1)}{\vert \G_{z_1} \vert}, 
\end{align*}
where $\G_{z_1}$ denotes the stabilizer of $z_1$.\\

\section{Effective equidistribution of angles}
Let $G=\slr$. The group $G$ acts on the upper halfplane $\H$ by linear fractional transformations 
\begin{equation*}
  gz=\frac{az+b}{cz+d}, \ g=\mattwo{a}{b}{c}{d}\in G,\, z\in \H.
\end{equation*}
Let $\G\subset \slr$ be discrete and cofinite. For simplicity we assume that $-I\notin \G$. If $-I\in \G$ we need to multiply all main terms by $2$.

For $z\in \H$ we let $r=r(z)$ and $\p=\p(z)$ be the geodesic polar coordinates of $z$. These are related to the rectangular coordinates by
\begin{equation}
  \label{eq:1}
  z=\mattwo{\cos \p(z)}{\sin\p(z)}{-\sin\p(z)}{\cos{\p(z)}}\exp{(-r(z))}i.
\end{equation}
We note that if $z_0=x_0+iy_0$ and we let
\begin{equation*}
  \g_0=\mattwo{1/\sqrt{y_0}}{-x_0/\sqrt{y_0}}{0}{\sqrt{y_0}}
\end{equation*}
then it is straightforward to check that $\g_0 z_0=i$. We see that
\begin{align*}
  \angle_{z_0,z_1}(\g)&=\angle_{i,\g_0 z_1}(\g_0\g\g_0^{-1})=\p(\g_0\g\g_0^{-1}(\g_0 z_1) )/\pi
\end{align*}
and
\begin{align*}
d(z_0,\g z_1)&=d(i,\g_0\g\g_0^{-1}(\g_0 z_1) )=r(\g_0\g\g_0^{-1}(\g_0 z_1) ).
\end{align*}
Therefore after conjugation of the group $\G$ the counting problems in
the introduction may be formulated in terms of $r(\g z)$ and $\p(\g
z)$ with $z=\g_0 z_1$.

The Laplacian for the $G$-invariant measure $d\mu(z)=dxdy/y^2$
on $\H$ is given in Cartesian coordinates by 
\begin{equation*}
  \label{eq:13}
\Delta=y^2\left(\frac{\partial^2}{\partial x^2}+\frac{\partial^2}{\partial y^2}\right).
\end{equation*}

In geodesic polar coordinates the Laplace operator is given by
\begin{equation}
  \label{eq:2}
  \Delta=\frac{\partial^2 }{\partial r^2}+\frac{1}{\tanh r}\frac{\partial }{\partial r}+\frac{1}{4\sinh^2 (r)}\frac{\partial^2 }{\partial \p^2}.
\end{equation}

Consider  $L^2(\Gamma\backslash\H,d\mu(z))$ with
inner product $\inprod{f}{g}=\int_{\Gamma\backslash
  \H}f\overline{g}d\mu(z)$ and norm
$\norm{f}_2=\sqrt{\inprod{f}{f}}$. The Laplacian induces an operator
on $L^2(\Gamma\backslash\H,d\mu(z))$ called the automorphic Laplacian defined as follows:
Consider the operator defined by $-\Delta f$ on  smooth,  bounded,
$\Gamma$-invariant functions satisfying that $-\Delta f$ is also
bounded. This operator is densely defined in
$L^2(\Gamma\backslash \H)$ and is in fact essentially selfadjoint. The closure of this operator is called the automorphic
Laplacian. By standard abuse of notation we also denote this operator by $-\Delta$ 

The automorphic Laplacian is selfadjoint and non-negative and has
eigenvalues 
\begin{equation*}
  0=\lambda_0<\lambda_1\leq \lambda_2 \leq\ldots \lambda_i \leq \ldots 
\end{equation*}
with the number of eigenvalues being finite or $\lambda_i\to\infty$.
It has a continuous spectrum $[1/4, \infty[$ with multiplicity equal to
the number of inequivalent cusps.

By standard operator theory for selfadjoint operators (See
e.g. \cite{Kato:1976a}) the resolvent $R(s)=(-\Delta-s(1-s))^{-1}$ is a
bounded operator which is
meromorphic in $s$ for $s(1-s)$ \emph{off} the spectrum of $-\Delta$. For an eigenvalue $\lambda_i$ outside the continuous spectrum
the operator $R(s)-P_i/(\lambda_i-s(1-s))$ is analytic at $s$ satisfying
$s(1-s)=\lambda_i$ where $P_i$ is the projection to the
$\lambda_i$-eigenspace. In particular for $\lambda=0$ we note that \begin{equation}\label{poleat1}R(s)-\frac{P_0}{-(s(1-s))}\end{equation} is analytic for
$\Re(s)>1-\delta$ for some $\delta$ where $P_0 f=\int f(z)
d\mu{(z)}/\vol{(\Gamma\backslash\H)}$ is the projection to the
$0$-eigenspace. (Alternatively one may quote \cite[Theorem
7.5]{Iwaniec:2002a} to obtain
the same result.)

We define for $\Re(s)>1$
\begin{equation}
  \label{eq:3}
  G_n(z,s)=\sum_{\g\in \G}\frac{e(n\p(\g z)/\pi)}{(\cosh (r(\g z)))^s}.
\end{equation}
 We recall that 
\begin{equation}
  \label{eq:5}
  \cosh (r(\g z))=1+2u(\gamma z,i),
\end{equation}
where $u(z,w)$ is the point pair invariant defined by
\begin{equation}
  \label{eq:6}
  u(z,w)=\frac{\abs{z-w}^2}{4\Im(z)\Im(w)}.
\end{equation}
Hence 
\begin{equation*}
\abs{  \frac{e(n\p( z)/\pi)}{(\cosh (r( z)))^s}}\leq \frac{1}{(1+2u(z,i))^{\Re(s)}}.
\end{equation*}

It therefore follows from \cite[Theorem 6.1]{Selberg:1989a} and the
discussion leading up to it that the sum (\ref{eq:6}) converges
absolutely and uniformly on compact sets and the limit is
$\G$-automorphic,  and bounded in $z$ -- in particular square integrable
on $\GmodH$.

By applying the Laplace operator to $G_n(z,s)$ a straightforward calculation shows that 
\begin{equation}
  \label{eq:4}
  (-\Delta -s(1-s))G_n(z,s)=s(s+1)G_n(z,s+2)+\sum_{\g\in \G}\frac{  n^2e(n\p(\g z)/\pi)}{\sinh^2 (r(\g z))(\cosh (r(\g z)))^s}.
\end{equation}
The sum on the right converges absolutely and uniformly on compacta for $\Re(s)>-1$. Since $G_n(z,s)$ is square integrable, we may invert (\ref{eq:4}) using the resolvent
\begin{equation}
  \label{eq:7}
  R(s)=(-\Delta-s(1-s))^{-1},
\end{equation}
so we have
\begin{equation}
  \label{eq:8}
  G_n(z,s)=R(s)\left(s(s+1)G_n(z,s+2)+\sum_{\g\in \G}\frac{ n^2e(n\p(\g z)/\pi)}{\sinh^2 (r(\g z)) (\cosh (r(\g z)))^s}\right).
\end{equation} 
The right-hand-side is meromorphic in $s$ for $\Re(s)>1/2$ since the
resolvent is holomorphic for $s(1-s)$ not in the spectrum of the
automorphic Laplacian. This gives the meromorphic continuation of
$G_n(z,s)$ to $\Re(s)>1/2$. The only potential poles are at $s=1$ and
$s=s_j$ where $s_j(1-s_j)$ is a small eigenvalue for the automorphic
Laplacian. Using the analyticity of (\ref{poleat1}) we see that the pole at $s=1$ has residue
\begin{equation}
  \label{eq:9}
  \frac{1}{\vol{(\GmodH)}}\int_{\GmodH}\left(2G_n(z,3)+\sum_{\g\in \G}\frac{ n^2e(n\p(\g z)/\pi)}{\sinh^2 (r(\g z)) \cosh (r(\g z))}\right)d\mu(z).
\end{equation}
By unfolding the integral we find that this equals
 \begin{equation}
   \label{eq:9b}
   \frac{1}{\vol{(\GmodH)}}\int_\H\left(2\frac{e(n\p(z)/\pi)}{\cosh^3 (r(z))}+\frac{ n^2e(n\p(z)/\pi)}{\sinh^2 (r(z)) \cosh (r(z))}\right)d\mu(z).
 \end{equation}
Changing to polar coordinates we find
\begin{equation}
  \label{eq:10}
  \frac{1}{\vol{(\GmodH)}}\int_0^\infty\int_0^{\pi}\left(2\frac{e(n\p/\pi)}{\cosh^3 (r)}+\frac{  n^2e(n\p/\pi)}{\sinh^2 (r) \cosh (r)}\right)2\sinh (r) d\p dr,
\end{equation}
which equals
\begin{equation}
  \label{eq:11}
  \frac{2\pi\delta_{n=0}}{\vol{(\GmodH)}}.
\end{equation} 
This follows since
\begin{align*}
\int_0^\infty \frac{2\sinh(r)}{\cosh(r)^3}dr = 1.
\end{align*}

From a Wiener-Ikehara Tauberian theorem (see e.g. \cite[Theorem 3.3.1
and Exercises 3.3.3+3.3.4]{Murty:2001a}) we may conclude that
\begin{equation}
  \label{eq:12}
\sum_{\substack{\g\in \G\\\cosh(r(\g z))\leq R}}\!\! e(n\p(\g)/\pi) =2\pi \frac{\delta_{n=0}}{\vol{(\GmodH)}}R+o(R).
\end{equation}
This implies immediately -- via Weyl's criterion -- that the angles $\p(\g)/\pi$ are equidistributed modulo 1.  

Since we intend to obtain a power saving in the remainder term we investigate $G_n(z,s)$ a bit more careful:

\begin{lemma}\label{growth} Write $s=\sigma+it$. For $z$ in a fixed
  compact set $K \subset \H$, $\abs{t}>1$ and $\sigma>\sigma_0>1/2$ we have 
  
  \begin{equation*}
    G_n(z,s)=O(\abs{t}(\abs{t}^2+n^2)),
  \end{equation*}
where the implied constant may depend on $\G$, $K$, and $\sigma_0$. 
\end{lemma}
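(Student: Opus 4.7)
The plan is to start from the functional equation (\ref{eq:8}), which expresses $G_n(z,s)$ as $R(s)$ applied to an explicit forcing term
\begin{equation*}
H_n(z,s):=s(s+1)G_n(z,s+2)+n^2\sum_{\g\in\G}\frac{e(n\p(\g z)/\pi)}{\sinh^2(r(\g z))(\cosh(r(\g z)))^s},
\end{equation*}
to combine the spectral theorem for $-\Delta$ with growth estimates for $H_n$ so as to produce an $L^2$ bound on $G_n(\cdot,s)$, and finally to upgrade this $L^2$ bound to the claimed pointwise bound on the compact set $K$ by elliptic regularity.

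For the resolvent norm, the spectral theorem applied to the selfadjoint operator $-\Delta$ on $L^2(\GmodH)$ gives $\|R(s)\|_{L^2\to L^2}=1/\mathrm{dist}(s(1-s),\mathrm{spec}(-\Delta))$. Writing $s=\sigma+it$, the quantity $s(1-s)=\sigma(1-\sigma)-t^2+i(1-2\sigma)t$ has imaginary part of modulus $(2\sigma-1)|t|\geq(2\sigma_0-1)|t|$, and since $\mathrm{spec}(-\Delta)\subset[0,\infty)$ is real this gives $\|R(s)\|_{L^2\to L^2}\ll 1/|t|$ with the implied constant depending only on $\sigma_0$. To bound $\|H_n(\cdot,s)\|_2$, note that since $\Re(s+2)>5/2$ the series defining $G_n(z,s+2)$ converges absolutely and uniformly with a bound independent of $t$, so its contribution to $\|H_n\|_2$ is $O(|t|^2)$; the second summand is $O(n^2)$ away from the $\G$-orbit of $i$, and its $1/\sinh^2 r$ singularity at that orbit is tamed by the angular oscillation $e(n\p/\pi)$, which has vanishing mean around each singular point whenever $n\neq 0$. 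After either subtracting off a locally modelled singular profile or adopting a principal-value interpretation, this second summand contributes a further $O(n^2)$, so that $\|G_n(\cdot,s)\|_2\leq\|R(s)\|\,\|H_n(\cdot,s)\|_2\ll(|t|^2+n^2)/|t|$.

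To pass to the pointwise bound on $K$, I use elliptic regularity. From $-\Delta G_n=s(1-s)G_n+H_n$ we get $\|\Delta G_n(\cdot,s)\|_2\ll|t|^2\cdot(|t|^2+n^2)/|t|+(|t|^2+n^2)\ll|t|(|t|^2+n^2)$, and the local Sobolev embedding $H^2\hookrightarrow L^\infty$ on a relatively compact open neighbourhood of $K$ gives
\begin{equation*}
|G_n(z,s)|\ll\|G_n(\cdot,s)\|_2+\|\Delta G_n(\cdot,s)\|_2\ll|t|(|t|^2+n^2),
\end{equation*}
uniformly for $z\in K$, which is the claim. The main technical subtlety is the singular behaviour of $H_n$ at the orbit of $i$, where the $1/\sinh^2 r$ factor just fails to be locally $L^2$; making precise how (\ref{eq:4}) is to be inverted through this singularity is the only delicate step, everything else being a routine application of the spectral bound for $R(s)$ together with Sobolev embedding.
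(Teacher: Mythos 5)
Your overall strategy coincides with the paper's: the resolvent bound $\norm{R(s)}_{L^2\to L^2}\leq 1/((2\sigma-1)\abs{t})$ from the spectral theorem, an $L^2$ bound on $G_n(\cdot,s)$ obtained by feeding the forcing term of (\ref{eq:4}) through $R(s)$, the consequent bound $\norm{\Delta G_n(\cdot,s)}_2=O(\abs{t}(\abs{t}^2+n^2))$, and the upgrade to a pointwise bound on $K$ via interior elliptic regularity and the Sobolev embedding $W^{2,2}\hookrightarrow L^\infty$. All of those steps are carried out correctly and match the paper.

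The gap is precisely at the point you flag as ``the only delicate step,'' and your proposed resolution does not close it. Angular oscillation cannot help an $L^2$ norm, because $\abs{e(n\p(\g z)/\pi)}=1$: near a point with $\g z=i$ one has $\sinh^2(r(\g z))\asymp\abs{z-\g^{-1}i}^2$, so the offending summand has modulus $\asymp n^2\abs{z-\g^{-1}i}^{-2}$, whose square integrates like $\int_0^\epsilon\rho^{-4}\rho\,d\rho=\infty$. Thus for $n\neq 0$ the forcing term is genuinely not in $L^2(\GmodH)$, and a principal-value regularization turns it into a distribution outside $L^2$, to which the operator bound $\norm{R(s)}_{L^2\to L^2}\ll 1/\abs{t}$ does not apply; the inequality $\norm{G_n}_2\leq\norm{R(s)}\,\norm{H_n}_2$ is therefore not available as written. (Note also that $G_n(\cdot,s)$ itself is discontinuous at the orbit of $i$, since $e(n\p(z)/\pi)$ has no limit as $z\to i$, so membership in the domain of the closed operator $-\Delta$ needs justification.) To be fair, the paper's own proof silently has the same problem: in (\ref{eq:for15}) it majorizes the forcing term by $\norm{\sum_\g n^2/(\sinh^2(r(\g z))\cosh^{1/2}(r(\g z)))}_2$ and treats this as $O(n^2)$, although that norm diverges. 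A genuine repair requires an actual construction, e.g.\ modifying the summand in (\ref{eq:3}) by a factor that vanishes to the appropriate $n$-dependent order at $r=0$ and tends to $1$ as $r\to\infty$ (so each term becomes smooth at $\g z=i$ without disturbing the counting asymptotics), or splitting off the explicit compactly supported singular part and controlling its resolvent image through pointwise bounds on the automorphic Green's function. Your second suggestion points in that direction, but the conclusion that it ``contributes a further $O(n^2)$'' to the $L^2$ norm is asserted rather than proved, and the cancellation mechanism you lead with is false for $L^2$ norms.
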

\begin{proof}
We recall that  \cite[V (3.16)]{Kato:1976a}
\begin{equation}
  \label{eq:16}
  \norm{R(s)}_\infty\leq \frac{1}{\hbox{dist}(s(1-s),\hbox{spec}(-\Delta))}\leq  \frac{1}{\abs{t}(2\sigma-1)},
\end{equation}
where $\Vert \cdot \Vert_\infty$ denotes the operator norm.
For $\sigma>3/2$ we have 
\begin{equation}
  \label{eq:14}
\norm{G_n(z,s)}_2\leq \norm{G_0(z,3/2)}_2=O(1).
\end{equation}
For $\sigma>\sigma_0$
we may use this and (\ref{eq:8}) to conclude that
\begin{align}\label{eq:for15}
\nonumber  \norm{G_n(z,s)}_2&\leq \norm{R(s)}_\infty\left(\norm{s(s+1)G_n(z,s+2)}_2+\norm{\sum_{\g\in \G}\frac{ n^2e(n\p(\g z)/\pi)}{\sinh^2 (r(\g z)) (\cosh (r(\g z)))^s}}_2\right)\\
&\leq \frac{1}{\abs{t}(2\sigma-1)}\left(\abs{t}^2\norm{G_0(z,3/2)}_2+\norm{\sum_{\g\in \G}\frac{ n^2}{\sinh^2 (r(\g z)) (\cosh (r(\g z)))^{1/2}}}_2\right)\\
&=O(\abs{t}^{-1}(\abs{t}^2+n^2)).\nonumber
\end{align}

Using this and (\ref{eq:4}) we find
\begin{equation}
  \label{eq:15}
  \norm{\Delta G_n(z,s)}_2=O(\abs{t}(\abs{t}^2+n^2)).
\end{equation}
We can now use the Sobolev embedding theorem and elliptic regularity
theory to get a pointwise bound:

For any non-empty open set $\Omega$ in $\R^2$ we consider the
classical Sobolev
space $W^{k,p}(\Omega)$ with corresponding norm
$\norm{\cdot}_{W^{k,p}(\Omega)}$ (See
\cite[p. 59]{AdamsFournier:2003a}). Whenever $\Omega$ satisfies the
cone condition (See \cite[p. 82]{AdamsFournier:2003a}) the Sobolev
embedding theorem \cite[Thm 4.12]{AdamsFournier:2003a}) gives an embedding
\begin{equation}
  \label{eq:20}
  W^{2,2}(\Omega)\to C_B(\Omega)
\end{equation} where $C_B(\Omega)$ is the set of bounded continuous
functions on $\Omega$ equipped with the sup norm. In particular for
$f\in W^{2,2}(\Omega)$ we have 
\begin{equation}
  \label{eq:21}
  \sup_{z\in \Omega}\abs{f(z)}\leq C\norm{f}_{W^{2,2}(\Omega)}
\end{equation}
where $C$ is a constant which depends only on $\Omega$. 

By elliptic regularity theory, if $\Delta_E=\partial^2/\partial
x^2+\partial^2/\partial y^2$ is the Euclidean Laplace operator we have
also that if $u\in W^{1,2}(\Omega)$ satisfies $\Delta_E u\in
L^2(\Omega)$ (weak derivative) then
\begin{equation}\label{eq:22}
\norm{u}_{W^{2,2}(\Omega')}\leq
C'(\norm{u}_{L^2(\Omega)}+\norm{\Delta_E u}_{L^2(\Omega)})
\end{equation}
for all  $\Omega'\subset \Omega$ which satisfies that the closure of
$\Omega'$ is compact and contained in $\Omega$. Here  $C'$ is a constant which depends only on $\Omega$, and $\Omega'$
(See \cite[Theorem 8.2.1]{Jost:2002a}).

We can use this general theory to bound $\abs{G_n(z,s)}$ in the following way: For every $z$ in
the compact set $K$ we fix a small open (Euclidean) disc $\Omega_z$
centered at $z$ with some radius such that its closure
$\overline\Omega_z$ is contained in $\H$. Let $\Omega_z'$ be the open
disc with half the radius. By compactness of $K$ the cover
$\{\Omega_z'\}$ admits a finite subcover i.e. $K\subset
\cup_{i=1}^n\Omega_{z_i}$  for $z_i\in K$. Choose as a fundamental
domain for $\Gamma\backslash\H$  a normal polygon $F$. Since $\Gamma$
is a discrete subgroup of $\slr$, $\Omega_{z_i}$ intersects non-trivially with $\gamma F$ for
only finitely many (say $n_i$) $\gamma\in \Gamma$ (See \cite[1.6.2
(3)]{Miyake:2006a}).

Therefore, for any automorphic function $f$, 
\begin{align}
  \label{eq:23a}
 \nonumber \norm{f}^2_{L^2(\Omega_{z_i})}:=&\int_{\Omega_{z_i}}\abs{f(z)}^2dxdy\\
  &\leq
  n_i\overline y_i^2\int_{F}\abs{f(z)}^2d\mu(z)= n_i\overline y_i^2\norm{f}_2^2
\end{align}
and 
\begin{align}
  \label{eq:23b}
 \nonumber \norm{\Delta_E
    f}^2_{L^2(\Omega_{z_i})}:=&\int_{\Omega_{z_i}}\abs{\Delta_E
    f(z)}^2dxdy\\ & \leq
  n_i\underline y_i^{-2}\int_{F}\abs{\Delta f(z)}^2d\mu(z)= n_i\underline
  y_i^{-2}\norm{\Delta f}_2^2
\end{align}
where $\overline y_i<\infty$ and $\underline y_i>0$ are heights over and under $\Omega_i$.
It is straightforward to verify that  $G_n(z,s)$ is in
$W^{1,2}(\Omega_i)$ (since it is continuously differentiable) and that $\Omega_i$ has the cone property, so we may use the above
inequalities to conclude
\begin{align*}
\sup_{z\in K}&\abs{G_n(z,t)}\leq  \max_{i=1}^n \sup_{z\in
  \Omega_{z_i}'}\abs{G_n(z,s)}\\
& \leq  \max_i C_i \norm{G_n(z,s)}_{W^{2,2}(\Omega_{z_i}')}
&\textrm{by (\ref{eq:21})} \\
& \leq  \max_i C_iC_i'C_i'' (\norm{G_n(z,s)}_{L^2(\Omega_{z_i})}+\norm{\Delta_E G_n(z,s)}_{L^2(\Omega_{z_i})}) &\textrm{by (\ref{eq:22})}\\
& \leq  \max_i C_iC_i'C_i''(\norm{G_n(z,s)}_2+\norm{\Delta G_n(z,s)}_2) &\textrm{  by (\ref{eq:23a}) and (\ref{eq:23b})}\\
&\leq C_K (\abs{t}(n^2+\abs{t}^2))&\textrm{ by (\ref{eq:for15}) and (\ref{eq:15})}
\end{align*}
which concludes the proof.
\end{proof}
We note that Lemma \ref{growth} implies that
\begin{equation}
    G_n(z,s)=O(\abs{t}^3)
\end{equation}
when $\abs{n}\leq \abs{t}$,
and by applying the Phragm\'{e}n-Lindel\"of theorem we may reduce the exponent to 
$\max(6(1-\sigma)+\varepsilon,0)$ for any $\varepsilon>0$.

We may now use the meromorphic continuation of $G_n(z,s)$ and Lemma \ref{growth} to get an asymptotic expansion with error term for the sum in (\ref{eq:12}). We will assume that there are no exceptional eigenvalues, which implies that $G_n(z,s)$ is regular in $\Re(s)>1/2$. If this is not the case $G_n(z,s)$ will still be regular in $\Re(s)>h$ for some  $h<1$. In (\ref{cauchy}) below we then move the line of integration to $\Re(s)>h+\varepsilon$. Proceeding with the obvious changes still gives a nontrivial error term in the end. We shall not dwell on the details.

Let $\psi_U:\R_+\to\R$, $U\geq U_0$,  be a family of smooth non-increasing functions with \begin{equation}\label{krebsgang}\psi_U(t)=\begin{cases}1 &\textrm{ if }t\leq1-1/U\\0 &\textrm{ if }t\geq1+1/U,  \end{cases}\end{equation} and $\psi_U^{(j)}(t)=O(U^j)$ as $U\to\infty$. For $\Re(s)>0$ we let \begin{equation*}M_U(s)=\int_0^\infty\psi_U(t)t^{s-1}dt\end{equation*}be the Mellin transform of $\psi_U$. Then we have \begin{equation}\label{vud1}M_U(s)=\frac{1}{s}+O\left(\frac{1}{U}\right)\qquad\textrm{as }U\to\infty \end{equation} and for any $c>0$  \begin{equation}\label{vud2}M_U(s)=O\left(\frac{1}{\abs{s}}\left(\frac{U}{1+\abs{s}}\right)^c\right)\qquad\textrm{as }\abs{s}\to\infty.\end{equation} Both estimates are uniform for $\Re(s)$ bounded.   The first is a mean value estimate while the second is successive partial integration and a mean value estimate. We use here the estimate $\psi_U^{(j)}(t)=O(U^j)$. 
The Mellin inversion formula now gives
\begin{align}\sum_{\gamma\in \G}e(n\p(\g z)/\pi)\psi
_U\left(\frac{\,\cosh(r(\g z))
}{R}\right)=\frac{1}{2\pi
 i}\int_{\Re(s)=2}\!\!\!G_n(z,s)M_U(s)R^sds.
\end{align} We note that by Lemma \ref{growth} the integral is convergent as long as $G_n(z,s)$ has polynomial growth on vertical lines. We now move the line of integration to the line $\Re(s)=h$ 
 with $h<1$ by integrating along a box of some height and then letting this height go to infinity. Using Lemma \ref{growth} we find that the contribution from the horizontal sides goes to zero. Assume that $s=1$ is the only pole of the integrand with $\Re(s)\geq 1/2+\varepsilon$. Then using Cauchy's residue theorem we obtain
\begin{align}
\nonumber\frac{1}{2\pi i}&\int_{\Re(s)=2}\!\!\!G_n(z,s)M_U(s)R^sds\\ \label{cauchy}&=\hbox{Res}_{s=1}\left(G_n(z,s)M_U(s)R^s\right)+\frac{1}{2\pi i}\int_{\Re(s)=1/2+\varepsilon}\!\!\!G_n(z,s)M_U(s)R^sds\\
\nonumber&=\delta_{n=0}\left(\frac{2\pi R}{\vol(\GmodH)}+O(R/U)\right)+\frac{1}{2\pi i}\int_{\Re(s)=1/2+\varepsilon}\!\!\!G_n(z,s)M_U(s)R^sds.
\end{align}
If there are other small eigenvalues there are additional main terms. In bypassing we note that their coefficients will depend on the $n$-th hyperbolic Fourier coefficients of the eigenfunctions corresponding to small eigenvalues. (See \cite[Theorem 4 p. 116]{Good:1983b}.)  
If we choose $c=3+\varepsilon$ and use Lemma \ref{growth} the last
integral is $O(R^{1/2+\varepsilon}U^{3+\varepsilon} (n^2+1) )$. The
interval with $\abs{\Im(s)} \leq 1$ can easily be dealt with using the
bound
\begin{align*}
\Vert R(s)\Vert_\infty \le \max_j \left\lvert \frac{1}{\sigma(1-\sigma)}-
\frac{1}{\sigma_j(1-\sigma_j)}\right\rvert,
\end{align*}
which in turn gives us an estimate for $G_n(z,s)$.

 If $n=0$ we see that by further requiring $\psi_U(t)=0$ if $t\geq 1$ and  $\tilde\psi_U(t)=1$ if $t\leq 1$, we have
 \begin{equation*}
   \sum_{\gamma\in \G}\psi
_U\left(\frac{\,\cosh(r(\g z))
}{R}\right)\leq\sum_{\substack{\gamma\in \G\\ \cosh(r(\g z))\le R }} 1\leq \sum_{\gamma\in \G}\tilde\psi
_U\left(\frac{\,\cosh(r(\g z))
}{R}\right).
 \end{equation*}
Choosing $U=R^{1/8}$ we therefore obtain:
\begin{lemma}\label{usuallattice}
With assumptions as above we have
  \begin{align}\label{eq:19}
    \#\{ \g \in \G \vert \cosh (r(\g z))\leq R \}=\frac{2\pi R}{\vol (\GmodH)}+O(R^{7/8+\varepsilon}).
  \end{align}
\end{lemma}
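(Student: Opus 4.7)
The plan is to use the smoothed spectral identity the excerpt has just derived, specialized to $n=0$, together with a standard sandwich argument between a lower and an upper smooth cutoff, and then optimize the smoothing parameter $U$.

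First, I would take two smooth cutoffs $\psi_U$ and $\tilde\psi_U$ in the family defined in (\ref{krebsgang}), chosen so that additionally $\psi_U(t)=0$ for $t\ge 1$ and $\tilde\psi_U(t)=1$ for $t\le 1$. Then the sharp counting function is squeezed between $\sum_{\gamma}\psi_U(\cosh(r(\gamma z))/R)$ and $\sum_{\gamma}\tilde\psi_U(\cosh(r(\gamma z))/R)$ as already indicated. For each of these sums I apply the asymptotic formula (\ref{cauchy}) with $n=0$. The residue at $s=1$ contributes the main term, which by (\ref{vud1}) equals
\begin{equation*}
M_U(1)\cdot\frac{2\pi R}{\vol(\GmodH)} = \frac{2\pi R}{\vol(\GmodH)}+O(R/U),
\end{equation*}
and the shifted integral along $\Re(s)=1/2+\varepsilon$ is bounded, via Lemma \ref{growth} and the Mellin decay (\ref{vud2}) with $c=3+\varepsilon$, by $O(R^{1/2+\varepsilon}U^{3+\varepsilon})$. (The assumption that there are no exceptional eigenvalues is used here; the general case goes through with $1/2$ replaced by some $h<1$ and the same optimization.)

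Combining these, both smoothed counts equal
\begin{equation*}
\frac{2\pi R}{\vol(\GmodH)}+O(R/U)+O(R^{1/2+\varepsilon}U^{3+\varepsilon}).
\end{equation*}
The two error terms balance when $R/U \asymp R^{1/2+\varepsilon}U^{3+\varepsilon}$, that is when $U=R^{1/8}$ (up to the $\varepsilon$), giving a total error $O(R^{7/8+\varepsilon})$. Since the sharp count is sandwiched between the two smoothed counts, which share the same main term and the same error bound, the lemma follows.

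The only delicate point is verifying that the contour shift from $\Re(s)=2$ to $\Re(s)=1/2+\varepsilon$ is legitimate and that the horizontal segments vanish in the limit; this is handled by pairing the polynomial growth of $G_0(z,s)$ from Lemma \ref{growth} with the rapid decay of $M_U(s)$ from (\ref{vud2}), exactly as indicated in the discussion immediately preceding the lemma. The small-$|\Im(s)|$ part of the integral is controlled by the elementary resolvent bound quoted there. No further spectral input beyond the spectral gap and Lemma \ref{growth} is needed.
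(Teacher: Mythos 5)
Your proposal is correct and follows exactly the same route as the paper: apply the Mellin/contour-shift asymptotic (\ref{cauchy}) with $n=0$ to both a lower cutoff $\psi_U$ (vanishing for $t\ge 1$) and an upper cutoff $\tilde\psi_U$ (equal to $1$ for $t\le 1$), bound the shifted integral by $O(R^{1/2+\varepsilon}U^{3+\varepsilon})$ via Lemma \ref{growth} and (\ref{vud2}) with $c=3+\varepsilon$, balance against the $O(R/U)$ loss from (\ref{vud1}), and choose $U=R^{1/8}$. This is precisely the argument the paper gives.
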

We note that this implies (\ref{hyperbolic lattice}) with $\a=7/8$. Using this we can now deal with the general case. To get from a smooth cut-off to a sharp one we notice that if $\psi_U(t)=1$ for $t\leq 1$ then we may bound the difference
\begin{equation*}
  \sum_{\gamma\in \G}e(n\p(\g z)/\pi)\psi_U\left(\frac{\cosh (r(\g z))
}{R}\right)-\sum_{{\substack{\gamma\in \G\\ \cosh (r(\g z))\leq R}}}e(n\p(\g z)/\pi)=O\biggm( \!\!\!\!\!\sum_{\substack{\gamma\in \G\\ R<\cosh (r(\g z))\leq R(1+1/U)}}\!\!\!\!\! 1\biggm)
\end{equation*}
which by Lemma \ref{usuallattice} is $O(R/U+R^{7/8+\varepsilon})$. Combining the above we find that for $n\neq 0$
\begin{equation*}
 \sum_{\substack{\g\in \G\\\cosh(r(\g z))\leq R}}\!\! e(n\p(\g z)/\pi) =O(R^{1/2+\varepsilon}U^{3+\varepsilon}(n^2+1)+R/U+R^{7/8+\varepsilon}).
\end{equation*}
Using the Erd\"os-Tur\'an inequality \cite[Theorem 3]{ErdosTuran:1948a}  we find that 

\begin{align*}
  \frac{\#\{\g\in \G \vert  \cosh (r(\g z))\leq R,\ \p(\g z)/\pi\in I \}}{ \#\{\g\in \G \vert  \cosh (r(\g z))\leq R\ \}}=\abs{I}\\ +O(1/M+R^{-1/2+\varepsilon}U^{3+\varepsilon}M^2&+\log M (1/U+R^{-1/8+\varepsilon}))
\end{align*}
for any $M$. Letting $M=U=R^{1/12}$ we arrive at the following (still assuming that there are no small eigenvalues):
\begin{theorem} 
For all $\varepsilon > 0$ and $I \subset \R \slash \Z$ we have
  \begin{equation*}
     \frac{\#\{\g\in \G \vert \cosh (r(\g z))\leq R,\, \p(\g z)/\pi\in I \}}{\#\{\g\in \G \vert \cosh (r(\g z))\leq R\ \}}=\abs{I}+O(R^{-1/12+\varepsilon}).
  \end{equation*}
\end{theorem}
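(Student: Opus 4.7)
The strategy is to view the theorem as a discrepancy bound for the multiset of angles $\{\p(\g z)/\pi : \cosh(r(\g z))\leq R\}$, and to deduce it from the power-saving estimate for the weighted exponential sum $\sum_{\g} e(n\p(\g z)/\pi)$ established just above. Concretely, I would apply the Erd\"os--Tur\'an inequality, which bounds the discrepancy of any finite sequence in $\R/\Z$ by
\[
O\left(\frac{1}{M} + \sum_{n=1}^{M} \frac{1}{n}\abs{\frac{1}{N}\sum_{j} e(n\omega_j)}\right)
\]
for any cutoff $M\geq 1$. Taking $\omega_j = \p(\g z)/\pi$ for $\g$ with $\cosh r(\g z)\leq R$ and $N = \#\{\g : \cosh r(\g z)\leq R\}$ reduces matters to inserting the bounds already in hand.

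Next I would plug in the two inputs. The denominator satisfies $N = 2\pi R/\vol(\GmodH) + O(R^{7/8+\varepsilon})$ by Lemma \ref{usuallattice}, so $1/N = O(1/R)$. The sharp-cutoff exponential sum for $n\neq 0$ was bounded by $O(R^{1/2+\varepsilon}U^{3+\varepsilon}(n^2+1) + R/U + R^{7/8+\varepsilon})$. Dividing by $N$ and summing $1/n$ against $n^2+1$ produces a factor of $M^2$, yielding the discrepancy estimate
\[
O\left(\frac{1}{M} + R^{-1/2+\varepsilon}U^{3+\varepsilon}M^2 + \log M\left(\frac{1}{U} + R^{-1/8+\varepsilon}\right)\right)
\]
displayed just above the theorem statement.

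Finally I would optimize over the free parameters $M$ and $U$. The dominant contributions behave like $M^{-1}$, $R^{-1/2}U^3M^2$, and $U^{-1}$, and the ansatz $M = U = R^\delta$ balances the first two precisely when $-1/2 + 5\delta = -\delta$, i.e.\ $\delta = 1/12$. The residual $R^{-1/8+\varepsilon}$ contribution is absorbed, and the total error becomes $O(R^{-1/12+\varepsilon})$, uniformly in the interval $I$.

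The conceptual hurdle was cleared earlier in this section: meromorphically continuing $G_n(z,s)$ past $\Re(s)=1$ with polynomial growth in both $n$ and $\Im(s)$ (Lemma \ref{growth}), and then converting the resulting contour-shift estimate to a sharp-cutoff bound via the smoothing $\psi_U$ combined with Lemma \ref{usuallattice}. Given those inputs, the present theorem is little more than a careful application of Erd\"os--Tur\'an followed by parameter optimization, and there is no further analytic difficulty to overcome.
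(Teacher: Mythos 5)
Your proposal is correct and follows the paper's own argument essentially verbatim: the paper likewise feeds the sharp-cutoff bound $O(R^{1/2+\varepsilon}U^{3+\varepsilon}(n^2+1)+R/U+R^{7/8+\varepsilon})$ into the Erd\H{o}s--Tur\'an inequality, obtains the same discrepancy expression $O(1/M+R^{-1/2+\varepsilon}U^{3+\varepsilon}M^2+\log M(1/U+R^{-1/8+\varepsilon}))$, and sets $M=U=R^{1/12}$. Your balancing computation $-1/2+5\delta=-\delta$ giving $\delta=1/12$ is exactly the optimization the paper performs.
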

Theorem \ref{effective equidistribution} follows easily.
\section{Proof of Theorem \ref{boca-generalized}}

We wish to find the limiting distribution of the number of lattice points in angular sectors defined from $z_0$ when ordering the lattice points $\gamma w$ according to the distance to $z_1$. More precisely we want to find the asymptotics of 
 \begin{align}\label{again}
 \mathscr{N}_\G^I(R,z_0,z_1,w) = \# \{ \g \in \Gamma  \vert d(z_1,\gamma w)\le R, \, \varphi_{{z_0},w}(\g) \in I\}.
 \end{align}

Our strategy for finding the asymptotics is the following:  We find the hyperbolic distance from $z_0$ to the intersection(s) between the hyperbolic circle with center at $z_1$ and radius $R$ and the geodesic through $z_0$ determined by an angle $t\in [-\pi,\pi]$ relative to the vertical geodesic through
$z_0$. Once we have an an asymptotic expression for this distance we can make a Riemann sum approximation of the counting function (\ref{again}). The summands can be estimated via Theorem \ref{effective equidistribution} leading to a proof of Theorem \ref{boca-generalized}.

We may safely assume that $z_0 = i$ -- it is easy to extend our
results to the general case. We would like to find the distance from
$i$ to the relevant intersection point which will be denoted by $w' =
x' + iy'$. There are 2 intersection points, but we choose the one which has negative real part for $t > 0$. This distance will be denoted $Q(z_1,t,R)$. 
\begin{figure}[h]
\begin{center}
\includegraphics{fig3.1}
\caption{}  
\label{computing Q}
\end{center}
\end{figure}

Now fix $z_1$, $t$  and $R$. Let $\alpha \in \R$ and $\delta \in \R_+$ denote the center and the radius respectively of the Euclidean half-circle which is the geodesic through $i$ and $w'$. From Figure \ref{computing Q} it is clear that 
\begin{equation}\delta = 1/\vert \sin(t)\vert, \quad \alpha = -\cot (t)\end{equation} if $t \ne 0,\pm \pi$. Thus we see that
\begin{align}\label{yprimecoord}
y' = \sqrt{\delta^2 -(x'-\alpha)^2} = \sqrt{1-x'^2+2\alpha x'}.
\end{align}
On the other hand it is well-known that the locus of points on the hyperbolic circle with center at $x_1 + iy_1$ and radius $R$ is determined by the equation
\begin{align*}
\vert x_1 + i y_1 \cosh(R) - z\vert = y_1 \sinh(R),
\end{align*}
which is equivalent to
\begin{align*}
x^2 + y^2 + x_1^2 -2xx_1 +y_1^2 = 2y_1y \cosh(R).
\end{align*}
Using the expression for $y'$ given in (\ref{yprimecoord}) we obtain the equation
\begin{equation}\label{quadratic}
\frac{\beta}{2} + (\alpha-x_1)x' =  y_1 \cosh(R)\sqrt{\delta^2 -(x'-\alpha)^2}
\end{equation}
for $x'$, where $\beta = \vert z_1 \vert^2 + 1$. By squaring (\ref{quadratic}) we get the quadratic equation
\begin{align*}
\left(\frac{(\alpha-x_1)^2}{y_1^2 \cosh^2(R)}+1\right)x'^2 + \left(\frac{\beta(\alpha-x_1)}{y_1^2 \cosh^2(R)}-2\alpha\right)x' +\frac{\beta^2}{4 y_1^2 \cosh^2(R)} - 1 = 0,
\end{align*}
with the solution
\begin{align}\label{xprimecoord}
x' = \frac{\alpha-\frac{\beta(\alpha-x_1)}{2y_1^2 \cosh^2(R)}-\fortegn(t)\sqrt{\delta^2 + \frac{(\alpha-x_1)^2}{y_1^2 \cosh^2(R)}-\frac{\beta^2}{4y_1^2 \cosh^2(R)}-\frac{\alpha\beta(\alpha-x_1)}{y_1^2 \cosh^2(R)}}}{1+\left(\frac{\alpha - x_1}{y_1 \cosh(R)}\right)^2}.
\end{align}
Naturally, the quadratic equation has 2 solutions, but the solution
above is the intersection point we are interested in.
The distance $Q(z_1,t,R)$ is
\begin{align}
\label{Q-expr}Q(z_1,t,R) &= \log\left(\frac{\vert w' + i\vert+\vert w'-i \vert}{\vert w'+i\vert-\vert w'-i\vert}\right).
\end{align}
We note that
\begin{align}
\nonumber\frac{\vert w' + i\vert+\vert w'-i \vert}{\vert w'+i\vert-\vert w'-i\vert} &= \frac{x'^2 + y'^2 + 1 +\sqrt{(x'^2+y'^2+1)^2-4y'^2}}{2y'}\\
\label{general formula}
&= \frac{1+\alpha x' + \delta \vert x'\vert}{y'}\\
\nonumber&= \frac{1+\alpha x' - \delta' x'}{y'},
\end{align}
where $\delta' = 1/\sin(t)$. Using Taylor's formula with remainder we see that
\begin{align*}
\fortegn(t)\sqrt{\delta^2 +\frac{(\alpha-x_1)^2}{y_1^2 \cosh^2(R)}-\frac{\beta^2}{4y_1^2 \cosh^2(R)}-\frac{\alpha\beta(\alpha-x_1)}{y_1^2 \cosh^2(R)}}=\\
\delta' +\frac{\frac{(\alpha-x_1)^2}{y_1^2 \cosh^2(R)}-\frac{\beta^2}{4y_1^2 \cosh^2(R)}-\frac{\alpha\beta(\alpha-x_1)}{y_1^2 \cosh^2(R)}}{2\delta'}+O\left(\frac{\delta}{\cosh^4(R)}\right)
\end{align*}
as $R \to \infty$, where the constant implied depends on $z_1$. From this and (\ref{general formula}) we deduce that
\begin{align}
x' = \frac{\alpha - \delta'}{1+\left(\frac{\alpha - x_1}{y_1 \cosh(R)}\right)^2}+ \frac{O((1+\delta)e^{-R})}{1+\left(\frac{\alpha - x_1}{y_1 \cosh(R)}\right)^2}
\end{align}
and hence
\begin{align}
1+\alpha x' - \delta' x' = 1 + \frac{(\alpha - \delta')^2}{1+\left(\frac{\alpha - x_1}{y_1 \cosh(R)}\right)^2}+ \frac{O(\delta(1+\delta)e^{-R})}{1+\left(\frac{\alpha - x_1}{y_1 \cosh(R)}\right)^2}.
\end{align}
This implies that
\begin{align}\label{enum}
\sin^2(t)\left(1+\left(\frac{\alpha - x_1}{y_1 \cosh(R)}\right)^2\right)(1+\alpha x' - \delta' x') = 2+2\cos (t) + O(e^{-R}).
\end{align}

Now we look at
\begin{align*}
y'^2 \left(1+\left(\frac{\alpha - x_1}{y_1
      \cosh(R)}\right)^2\right).
\end{align*}
Using Taylor's formula as before we get
\begin{align*}
y'^2 &\left( 1 + \left(\frac{\alpha - x_1}{y_1 \cosh(R)}\right)^2
\right)^2 \\
={}&\left( 1 + \left(\frac{\alpha - x_1}{y_1 \cosh(R)}\right)^2 \right)^2 - \Biggr(\alpha-\frac{\beta(\alpha-x_1)}{2y_1^2 \cosh^2(R)}-\\ 
&\fortegn(t)\sqrt{\delta^2 +\frac{(\alpha-x_1)^2}{y_1^2 \cosh^2(R)}-\frac{\beta^2}{4y_1^2 \cosh^2(R)}-\frac{\alpha\beta(\alpha-x_1)}{y_1^2 \cosh^2(R)}}\Biggr)^2+\\
& 2\alpha\left( 1 + \left(\frac{\alpha - x_1}{y_1 \cosh(R)}\right)^2 \right)\Biggr(\alpha-\frac{\beta(\alpha-x_1)}{2y_1^2 \cosh^2(R)}-\\
&\fortegn(t)\sqrt{\delta^2 +\frac{(\alpha-x_1)^2}{y_1^2 \cosh^2(R)}-\frac{\beta^2}{4y_1^2 \cosh^2(R)}-\frac{\alpha\beta(\alpha-x_1)}{y_1^2 \cosh^2(R)}}\Biggr)\\
={}&\frac{1}{y_1^2\cosh^2(R)}\biggr(\frac{\beta^2}{4}+(\alpha-x_1)^2+\alpha\beta (\alpha-x_1)+2\alpha^2(\alpha-x_1)^2-\\
&\delta'(\alpha-x_1)(\beta+2\alpha(\alpha-x_1))
\biggm)+O\left(\frac{\delta^4}{\cosh^4(R)}\right)\\
={}& \frac{(\beta - (\beta-2)\cos(t)+2x_1\sin(t))^2(1+\cos(t))^2}{4 y_1^2\cosh^2(R)\sin^4(t)}+O\left(\frac{\delta^4}{\cosh^4(R)}\right)
\end{align*}
as $R \to \infty$. From this we conclude that
\begin{align}
\nonumber\frac{1+\cos(t)}{2y'\left( 1 + \left(\frac{\alpha - x_1}{y_1 \cosh(R)}\right)^2 \right) \sin^2(t)} &=\frac{y_1\cosh(R)}{\beta-(\beta-2)\cos(t)+2x_1\sin(t)}+ O(e^{-4R})\\
\label{another expansion}
&=\frac{y_1 e^R}{2(\beta-(\beta-2)\cos(t)+2x_1\sin(t))} + O(e^{-R}).
\end{align}
We are interested in $e^{Q(z_1,t,R)}$. Combining (\ref{general formula}), (\ref{Q-expr}), (\ref{another expansion}) and (\ref{enum}) we conclude that
\begin{align}\label{R_distance}
e^{Q(z_1,t,R)} = \frac{1+\alpha x' + \delta' x'}{y'} = \frac{2y_1 e^R}{\beta-(\beta-2)\cos(t)+2x_1\sin(t)} + O(1).
\end{align}

To finish the proof we use the following elementary lemma which \lq
integrates\rq{} 
Theorem \ref{effective equidistribution} over more general regions:
\begin{lemma}\label{Riemann sums} Let $D(R,\theta): \R_+ \times \R/\Z \to \R_+$ be a
  function which satisfies $e^{D(R,\theta)}=k(\theta)e^R+O(e^{\beta R})$
  for some $\beta<1$ uniformly in $\theta$. Assume that $k(\theta)\in
  C^1(\R/\Z)$. Then as $R\to\infty$
\begin{align*}N_{\Gamma,D}^I(R,z_0,z_1) :&= \#\{ \gamma \in \G \mid d(z_0, \gamma z_1) \le D(R,\varphi_{z_0,z_1}(\gamma)), \ \varphi_{z_0,z_1}(\gamma) \in I \} \\
&=  \frac{\kappa_\G\pi}{ \vol(\GmodH)} \int_I k(\theta) d\theta e^R+O(e^{\delta R})
 \end{align*}
for some $\delta<1$.
\end{lemma}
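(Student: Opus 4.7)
The plan is a Riemann-sum sandwich. I would partition $I$ into short angular pieces, use the $C^1$-regularity of $k$ together with the hypothesis on $D(R,\theta)$ to replace $D$ by a constant on each piece, bound $N_{\G,D}^I$ above and below by two ordinary lattice counts, apply Theorem \ref{effective equidistribution} piecewise, and finally optimize the number of pieces.

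First, fix an integer $M$ (to be chosen later), partition $I$ into subintervals $I_1,\ldots,I_M$ of length $\abs{I}/M$, and pick representatives $\theta_j \in I_j$. The $C^1$-regularity of $k$ gives $k(\theta) = k(\theta_j) + O(1/M)$ on $I_j$, so the hypothesis on $D$ becomes, uniformly for $\theta \in I_j$,
\[
D(R,\theta) = R + \log k(\theta_j) + O(1/M) + O(e^{(\beta-1)R}),
\]
provided $k(\theta_j)$ is bounded below. Setting $R_j^{\pm} = R + \log k(\theta_j) \pm C\bigl(1/M + e^{(\beta-1)R}\bigr)$ for a sufficiently large constant $C$, this gives the sandwich
\[
N_{\G}^{I_j}(R_j^{-},z_0,z_1) \le N_{\G,D}^{I_j}(R,z_0,z_1) \le N_{\G}^{I_j}(R_j^{+},z_0,z_1).
\]
Angular regions where $k$ is close to $0$ are treated separately: there $D(R,\theta) \le \beta R + O(1)$, so by (\ref{hyperbolic lattice}) those regions contribute $O(e^{\beta R})$ to both sides of the desired identity.

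Next, Theorem \ref{effective equidistribution} combined with (\ref{hyperbolic lattice}) gives
\[
N_{\G}^{I_j}(R_j^{\pm},z_0,z_1) = \frac{\kappa_\G \pi \abs{I_j}}{\vol(\GmodH)} e^{R_j^{\pm}} + O(e^{(\alpha+\varepsilon)R})
\]
for some $\alpha < 1$, uniformly in $j$. Expanding $e^{R_j^{\pm}} = k(\theta_j)e^R\bigl(1 + O(1/M + e^{(\beta-1)R})\bigr)$, summing over $j$, and invoking the standard $C^1$ Riemann-sum estimate $\sum_j \abs{I_j} k(\theta_j) = \int_I k(\theta)\,d\theta + O(1/M)$ produces the claimed main term $\frac{\kappa_\G \pi}{\vol(\GmodH)}e^R \int_I k(\theta)\,d\theta$, with a total error of order
\[
O\bigl(e^R/M + e^{\beta R} + M e^{(\alpha+\varepsilon)R}\bigr).
\]

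Finally, I would take $M = \lfloor e^{(1-\alpha-\varepsilon)R/2}\rfloor$, which balances the first and third error terms and gives an overall error $O(e^{\delta R})$ with $\delta = \max\bigl(\beta,(1+\alpha+\varepsilon)/2\bigr) < 1$, as required. The only real obstacle is bookkeeping: one must keep the implicit constants in the sandwich step and the Riemann-sum step uniform in $j$ and in $\theta$ as the partition refines. This is handled by the global bound on $k'$ supplied by $k \in C^1(\R/\Z)$ and by isolating the angular regions where $k$ vanishes into the $O(e^{\beta R})$ error via the ordinary lattice point count.
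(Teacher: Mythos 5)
Your approach is essentially the paper's: partition $I$ into $M$ pieces, sandwich $N_{\Gamma,D}^{I_j}$ between ordinary angular counts at nearby radii, apply Theorem~\ref{effective equidistribution} to each piece, sum the Riemann sums, and balance $M$ against the per-piece error. The paper takes the infimum and supremum of $k$ on each subinterval where you take a representative $\theta_j$ together with the Lipschitz bound from $k\in C^1$ --- equivalent choices --- and your balancing $M\sim e^{(1-\a-\varepsilon)R/2}$ and resulting exponent $\delta=\max(\beta,(1+\a+\varepsilon)/2)$ agree with the paper's.

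The one step that does not hold up as written is the patch for angular regions where $k$ is small. The estimate $D(R,\theta)\le \beta R+O(1)$ is valid only where $k(\theta)=O(e^{(\beta-1)R})$, a set that shrinks with $R$; at any fixed threshold $\eta>0$ the set $\{k<\eta\}$ contributes up to $O(\eta e^R)$, which is not $O(e^{\beta R})$ and is comparable to the main term, so one cannot simply discard it. The cleaner route, and in effect what the paper's sandwich inequality does, is to never take the logarithm of $k$: on $I_j$ the hypothesis gives $k(\omega_j)e^R - Ce^{\beta R}\le e^{D(R,\theta)}\le k(\omega^j)e^R + Ce^{\beta R}$, so one can bound $N_{\Gamma,D}^{I_j}(R)$ above and below by $N_\G^{I_j}$ evaluated at the radii $\log\!\bigl(k(\omega^j)e^R + Ce^{\beta R}\bigr)$ and $\log\!\bigl(k(\omega_j)e^R - Ce^{\beta R}\bigr)$ (the latter trivially if the argument is $\le 1$). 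Feeding these into Theorem~\ref{effective equidistribution} keeps the $O(e^{\beta R})$ additive throughout, with no positivity assumption on $k$ and no separate small-$k$ region. (Alternatively, take $\eta=e^{-\epsilon R}$ and track the $\eta$-dependence of the implied constants on $\{k\ge \eta\}$, at the cost of a slightly weaker $\delta$.) Since the $k=\rho_{z_0,z_1}$ in the actual application is bounded away from zero, your version suffices there; but as a proof of the lemma in the stated generality, the small-$k$ step needs this correction.
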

\begin{proof} Let $B=B(R)$ be a integer valued function of $R$ to be determined
  later. For each integer $j\leq B$ we choose 
$\omega_j,
\omega^j \in \left[a+\frac{(j-1)(b-a)}{B},a+\frac{j(b-a)}{B}\right]$
such that  
\begin{align*}
k(\omega_j) = \inf \left\{ k(\omega) \biggm\vert
  \omega \in \left[a+\frac{(j-1)(b-a)}{B},a+\frac{j(b-a)}{B}\right] \right\}
\end{align*}
and
\begin{align*}
k(\omega^j) = \sup \left\{ k(\omega) \biggm\vert
  \omega \in \left[a+\frac{(j-1)(b-a)}{B},a+\frac{j(b-a)}{B}\right] \right\}.
\end{align*}
We split the interval in $B$ equal intervals (and compensate for
counting the
endpoints twice) to get
\begin{align*}N_{\Gamma,D}^I(R,z_0,z_1)=\sum_{j=0}^B&N_{\Gamma,D}^{[a+\frac{(j-1)}{B}(b-a),a+\frac{j}{B}(b-a)]}(R,z_0,z_1)\\
  &-\sum_{j=1}^{B-1}N_{\Gamma,D}^{[a+\frac{j}{B}(b-a),a+\frac{j}{B}(b-a)]}(R,z_0,z_1).
\end{align*}
 The last sum is $O(Be^{\alpha R})$ by Theorem \ref{effective
   equidistribution} and the assumption on $D(R,\theta)$. The first sum can
 be evaluated as follows. By using Theorem \ref{effective
   equidistribution} again we have 
\begin{align*}
    \frac{\kappa_\G\pi(b-a)}{B \vol(\GmodH)} \omega_je^R-Ce^{\alpha
      R}\leq &\\ N_{\Gamma,D}^{[a+\frac{(j-1)}{B}(b-a),a+\frac{j}{B}(b-a)]}&(R,z_0,z_1)\\&\leq
   \frac{\kappa_\G\pi(b-a)e^R}{B \vol(\GmodH)}\omega^j+Ce^{\alpha R}.
 \end{align*}
Summing this inequality we find the Riemann sums
$$\sum_{j=0}^B\omega_j\frac{(b-a)}{B}, \quad \sum_{j=0}^B\omega^j\frac{(b-a)}{B}.$$
Since $k$ is $C^1$ these converge to $\int_Ik(\theta)d\theta$ with rate $O(1/B)$ as is
seen using the mean value theorem. We therefore find  that 
$$N_{\Gamma,D}^I(R,z_0,z_1)= \frac{\kappa_\G\pi}{ \vol(\GmodH)}\int_I
k(\theta) d\theta e^R+O(e^R/B)+O(Be^{\alpha R}).$$
 Balancing the error terms we get the result.

\end{proof}

We can now finish the proof of Theorem \ref{boca-generalized}. Let $\rho_{z_0,z_1}(\omega)$ denote the fraction
\begin{align*}
\frac{2y_0 y_1}{((x_0-x_1)^2+y_0^2+y_1^2)(1-\cos(2\pi\omega))+2y_0^2\cos(2\pi\omega)+2(x_1-x_0)y_0\sin(2\pi\omega)}.
\end{align*}

We start with the case $z_0 = i$. Equation (\ref{R_distance}) allows
us to use Lemma \ref{Riemann sums} which gives Theorem
\ref{boca-generalized} immediately. The general case
can easily be reduced to the case where $z_0 = i$ by conjugation of
$\Gamma$ with the element $\left(\begin{smallmatrix} \sqrt{y_0} &
    x_0/\sqrt{y_0}\\ 0 & 1/\sqrt{y_0} \end{smallmatrix}\right)$. This
finishes the proof of Theorem 3.\qed\\

\noindent{\bf Acknowledgements:} We thank the anonymous referee for
his/her useful comments. 

\bibliographystyle{plain}
\bibliography{minbibliography.bib}

\end{document}